\newtheorem{thm}{Theorem}[section]
\newtheorem{cor}[thm]{Corollary}
\newtheorem{prop}[thm]{Proposition}
\theoremstyle{definition}
\numberwithin{equation}{section}
\def \varpi {\bar \omega}
\def \R {I \! \! R}
\def \H {I \! \! H\! \! I}
\def \C {\mathbb{C}}
\def\norm {\mid\!\mid}
\begin{document}


\baselineskip=17pt



\title[On quaternionic functions]{On quaternionic functions}

\author[P. Dolbeault]{Pierre Dolbeault}
\address{Institut de Math\'ematiques de Jussieu\\
UPMC, 4, place Jussieu 75005 Paris, France}
\email{pierre.dolbeault@upmc.fr}

\date{}

\begin{abstract}
Several sets of quaternionic functions are 
described and studied. Residue current of the right inverse of a quaternionic function is introduced in particular cases.

\end{abstract}

\subjclass[2010]{Primary 53C26; Secondary 30D30,  32A27}

\keywords{Cauchy-Fueter, hyperholomorphy, residue}

\maketitle

\section{Introduction}

We will work with the definition of quaternions using pairs of complex numbers and with a modified Cauchy-Fueter operator that have been introduced in [CLSSS 07]. 
We will only use right multiplication; the (right) inverse of a nonzero quaternion is defined. We will consider (for simplicity) $C^\infty$ $\H$-valued quaternionic functions defined on an open set $U$ of $\H$ containing $0$. If such a function does not vanish over $U$, it has an (algebraic) inverse which is defined almost everywhere on $U$. Examples are given (section 2).

The origin of this research is a tentative of extension to right inverse of a quaternionic function of the notion of residue current of a meromorphic differential 1-form of one complex variable, which will be developed in section 5. In one complex variable, if the given function is holomorphic, with isolated zeros of finite multiplicity, its inverse is meromorphic, then holomorphic outside the set of poles; so it is natural to search when this property extends to hyperholomorphic functions.                   

In section 3, we characterize the quaternionic functions which are  hyperholomorphic and whose inverses are hyperholomorphic almost everywhere, on $U$, as the solutions of a system of two non linear PDE. We only find non trivial examples of a solution, showing that the considered space of functions is significant: we will call theses functions hypermeromorphic. This also defines a space of germs of functions at 0.

In section 4, we try to describe a subspace ${\mathcal H}_U$ of hyperholomorphic and hypermeromorphic functions defined almost everywhere on $U$, having "good properties for addition and multiplication"; we obtain again systems of non linear PDE, and we give first results on, mainly unknown, spaces of functions.

In section 5, we first recall Cauchy principal value and residue current in $\C$, locally at 0. Afterwards, we define and study, locally, Cauchy principal value and residue current for the inverse of a quaternionic function, in very particular cases, and in relation with the classical theory in two complex variables. 

This paper is a first announcement of a more complete one in progress.

\section{Quaternions. $\H$-valued functions. [CLSSS 07]}

\subsection {Quaternions} If $q\in\H$, then $q=z_1+z_2{\bf j}$ where $z_1, z_2\in\C$, 
hence $\H\cong\C^2\cong\R^4$ as complex or real vector space. We have:
$z_1{\bf j}={\bf j}\overline z_1$ (by computation in real coordinates); by definition, the {\it modulus} of $q$ is $\norm q\norm=({\vert z_1\vert} ^2+{\vert z_2\vert} ^2)^{\frac{1}{2}}$.

The {\it conjugate} of $q$ is $\overline q= \overline z_1-z_2{\bf j}$. Let
* denote the {\it (right) multiplication} in $\H$: 

$q*\overline q=(z_1+z_2{\bf
j})*(\overline z_1-z_2{\bf j})=\vert z_1\vert ^2-z_1z_2{\bf j}+z_2{\bf
j}\overline z_1-z_2{\bf j}z_2{\bf j}=\vert z_1\vert ^2+\vert z_2\vert ^2$,
then: the {\it (right) inverse} of $q=z_1+z_2{\bf j}$ is: $(\vert z_1\vert ^2+\vert
z_2\vert ^2)^{-1}\overline q=(\vert z_1\vert ^2+\vert
z_2\vert ^2)^{-1}(\overline z_1-z_2{\bf j})$. Moreover: $(\vert z_1\vert ^2+\vert
z_2\vert ^2)^{-1}(\overline z_1-z_2{\bf j})*(z_1+z_2{\bf j})=1$,  so the right inverse of $q^{-1}$ is $q$. 

\subsection {Quaternionic functions.}

Let $U$ be an open set of $\H\cong\C^2$ and $f\in C^\infty (U,\H)$, then 
$f=f_1+f_2{\bf j}$, where $f_1, f_2\in C^\infty (U,\C)$. The complex valued functions $f_1,f_2$ will be called the {\it components} of $f$.\vskip 1mm
 
 Remark that $\H$ is a real vector space in which real analysis is valid, in particular differential forms, distributions and currents are defined in $\H$. 
 \vskip 1mm
 Remark that $\displaystyle\frac{\partial
f_1}{\partial\overline z_1}{\bf j}={\bf j}\frac{\partial\overline 
f_1}{\partial z_1}$ and analogous relations.

\subsection {Modified Cauchy-Fueter operator ${\mathcal D}$. 
Hyperholomorphic functions. ([CLSSS 07], [F 39])} 
For $f\in C^\infty (U,\H)$, with $f=f_1+f_2{\bf j}$, where $f_1, f_2\in C^\infty (U,\C)$, 

 $\displaystyle {\mathcal D} f(q)=\frac{1}{2}\big(\frac{\partial}{\partial\overline z_1}
+{\bf j}\frac{\partial}{\partial\overline z_2}\big) f(q)=\frac{1}{2}\big({\frac{\partial
f_1}{\partial\overline z_1}-\frac{\partial\overline f_2}{\partial z_2}}\big)
(q)+{\bf j}\frac{1}{2}\big({\frac{\partial f_1}{\partial\overline
z_2}+\frac{\partial\overline f_2}{\partial z_1}}\big) (q)$.

A function $f\in C^\infty (U,\H)$ s said to be {\it hyperholomorphic} if ${\mathcal
D}f=0$.

Characterization of the hyperholomorphic function $f$ on $U$:
${\displaystyle\frac{\partial
f_1}{\partial\overline z_1}-\frac{\partial\overline f_2}{\partial z_2}}=0; {\displaystyle\frac{\partial f_1}{\partial\overline
z_2}+\frac{\partial\overline f_2}{\partial z_1}}=0$, on $U$.

The conditions:$f_1$ is holomorphic and: $f_2$ is holomorphic are equivalent.
So {\it holomorphic functions} will be identified with  hyperholomorphic functions $f$ such that $f_2=0$. 

Let $f'=f'_1+f'_2{\bf j}$, $f"=f"_1+f"_2{\bf j}$ be two hyperholomorphic functions. 

For every $\alpha\in \H$, ${\mathcal D} (f'\alpha)=0$, ${\mathcal D} (f'+f")={\mathcal D}f'+{\mathcal D} f"=0$.\vskip 1mm

\begin{prop}\label{Proposition 2.3.1.}The set ${\mathcal H}$ of almost everywhere defined hyperholomorphic functions is an $\H$-right vector space.
\end{prop}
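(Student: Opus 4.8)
The plan is to recognize this as a subspace criterion. The set of all almost everywhere defined $C^\infty$ functions $U\to\H$ is already a right $\H$-vector space under pointwise addition and pointwise right scalar multiplication, and ${\mathcal H}$ sits inside it as the kernel of the operator ${\mathcal D}$, which is additive and commutes with right multiplication by quaternions. So it suffices to check that ${\mathcal H}$ is a non-empty subset closed under the two operations; the vector space axioms then transfer for free from the ambient space.

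First I would fix the ambient structure. Pointwise, $\H$ is a right vector space over itself as a skew field, so functions valued in $\H$ inherit the operations $(f+g)(q)=f(q)+g(q)$ and $(f\alpha)(q)=f(q)\alpha$ for $\alpha\in\H$. One subtlety to address is the ``almost everywhere'' clause: if $f$ is defined off a null set $N_f$ and $g$ off $N_g$, then $f+g$ is defined off $N_f\cup N_g$, still a null set, while $f\alpha$ is defined off $N_f$; so both operations stay within the class of a.e.\ defined functions. The remaining module axioms---associativity and commutativity of addition, the distributive laws $(f+g)\alpha=f\alpha+g\alpha$ and $f(\alpha+\beta)=f\alpha+f\beta$, the compatibility $f(\alpha\beta)=(f\alpha)\beta$, and $f\cdot 1=f$---all hold at each point because they hold in $\H$, hence hold a.e.

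Next I would invoke the closure facts recorded just before the statement. The computation ${\mathcal D}(f'+f'')={\mathcal D}f'+{\mathcal D}f''=0$ shows that ${\mathcal H}$ is closed under addition, and ${\mathcal D}(f'\alpha)=0$ for every $\alpha\in\H$ shows it is closed under right scalar multiplication. The zero function is hyperholomorphic and defined everywhere, so ${\mathcal H}$ is non-empty and contains an additive identity; additive inverses are supplied by $-f=f(-1)\in{\mathcal H}$. Together with closure, this exhibits ${\mathcal H}$ as a right $\H$-vector subspace, hence itself a right $\H$-vector space.

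I do not expect a genuine obstacle here, since the linearity of ${\mathcal D}$ over right multiplication does all the structural work. The only point requiring a moment's care is the bookkeeping for a.e.\ defined functions---confirming that addition does not shrink the common domain below full measure and that equality is read modulo null sets---but this reduces to the observation that a finite union of null sets is null.
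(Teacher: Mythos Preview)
Your proposal is correct and follows exactly the paper's approach: the paper does not give a separate proof but simply records, immediately before the proposition, the two closure facts ${\mathcal D}(f'\alpha)=0$ and ${\mathcal D}(f'+f'')={\mathcal D}f'+{\mathcal D}f''=0$, which you invoke as well. Your additional remarks on the ambient right $\H$-module structure and the almost-everywhere bookkeeping are more detailed than what the paper offers, but they are straightforward elaborations of the same argument.
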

 \begin{prop}\label{Proposition 2.3.2.}Let $f'$, $f"$ be two hyperholomorphic functions. Then, their product $f'*f"$ satisfies:

$${\mathcal D} (f'*f")={\mathcal D}f'*{\bf j}f"+\big(f'(\frac{\partial }{\partial\overline z_1})+\overline f'{\bf j}\frac{\partial }{\partial\overline z_2}\big)f"$$
\end{prop}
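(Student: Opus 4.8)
The statement is a first-order Leibniz identity for ${\mathcal D}$, and the plan is to prove it by a direct computation in components, the one genuine subtlety being that ${\mathcal D}$ does not act on the $*$-product as an ordinary derivation. First I would make the product explicit: using $z{\bf j}={\bf j}\overline z$ from \S2.1 one has $f'*f''=(f'_1f''_1-f'_2\overline{f''_2})+(f'_1f''_2+f'_2\overline{f''_1}){\bf j}$, so the two complex components $g_1,g_2$ of $g:=f'*f''$ are explicit. I would then apply the component form of the operator, ${\mathcal D}g=\frac12\big(\frac{\partial g_1}{\partial\overline z_1}-\frac{\partial\overline{g_2}}{\partial z_2}\big)+{\bf j}\,\frac12\big(\frac{\partial g_1}{\partial\overline z_2}+\frac{\partial\overline{g_2}}{\partial z_1}\big)$, and expand every derivative by the ordinary Leibniz rule for the complex-valued factors $f'_k,f''_k$.

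Next I would sort the resulting terms into those in which a derivative falls on a component of $f'$ and those in which it falls on a component of $f''$. In the first group the combinations $\frac{\partial f'_1}{\partial\overline z_1}-\frac{\partial\overline{f'_2}}{\partial z_2}$ and $\frac{\partial f'_1}{\partial\overline z_2}+\frac{\partial\overline{f'_2}}{\partial z_1}$ are exactly the two expressions appearing in the hyperholomorphicity characterization of $f'$; recognizing them lets me reassemble this group into a quaternionic term of the form ${\mathcal D}f'*(\cdots)$, which vanishes since $f'$ is hyperholomorphic. The second group, where the derivatives act on $f''$, I would put back into quaternionic form with the conjugation relations $\frac{\partial f_1}{\partial\overline z_1}{\bf j}={\bf j}\frac{\partial\overline{f_1}}{\partial z_1}$ and their analogues, which convert the $\partial_z$-derivatives of the conjugated components $\overline{f''_k}$ (produced by the multiplication rule) back into $\partial_{\overline z}$-derivatives and move the factors ${\bf j}$ and the coefficients of $f'$ into the positions required by the right-hand side.

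The main obstacle — and the reason the right-hand side has its particular shape — is that $\frac{\partial}{\partial\overline z_k}$ is only $\R$-linear while $*$ is not $\C$-bilinear, so $\frac{\partial}{\partial\overline z_k}$ is \emph{not} a derivation of $*$: the unit $i$ does not commute with ${\bf j}$ (equivalently $z{\bf j}={\bf j}\overline z$). A transparent way to package this is the defect identity
\[ \frac{\partial}{\partial\overline z_k}(f'*f'')-\Big(\frac{\partial f'}{\partial\overline z_k}\Big)*f''-f'*\Big(\frac{\partial f''}{\partial\overline z_k}\Big)=\tfrac12\,[\,i,f'\,]\,\frac{\partial f''}{\partial y_k}, \]
where $y_k=\mathrm{Im}\,z_k$ and $[\,i,f'\,]=2if'_2{\bf j}$; this isolates all the twisting in a single commutator term, and re-expressing that term through $\overline{f'}$ is precisely what produces the conjugate factor and the extra ${\bf j}$ in the second summand. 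I expect the careful, consistent tracking of these conjugations through the collection of terms to be the only delicate part; once it is handled, reuniting the two groups yields the stated identity.
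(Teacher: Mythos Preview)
Your plan is correct and follows essentially the same route as the paper: write $f'*f''$ in components, apply $\mathcal D$, split the terms according to whether the derivative falls on a component of $f'$ or of $f''$, and reassemble each group quaternionically using $z{\bf j}={\bf j}\overline z$. Two minor differences are worth noting. First, the paper does not pass to the component form $\frac12(\partial_{\overline z_1}g_1-\partial_{z_2}\overline{g_2})+{\bf j}\,\frac12(\partial_{\overline z_2}g_1+\partial_{z_1}\overline{g_2})$; instead it applies the operator $\frac12(\partial_{\overline z_1}+{\bf j}\partial_{\overline z_2})$ directly to the quaternion-valued product, which makes the first-factor group appear immediately as $\mathcal D f'*{\bf j}f''$ without having to spot the hyperholomorphicity combinations --- and in fact the paper never invokes the hypothesis $\mathcal D f'=0$ (or $\mathcal D f''=0$): the displayed formula is proved as an identity valid for arbitrary $C^\infty$ functions, so you need not say this group ``vanishes'', only that it equals $\mathcal D f'*{\bf j}f''$. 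Second, your commutator/defect identity $\partial_{\overline z_k}(f'*f'')-(\partial_{\overline z_k}f')*f''-f'*(\partial_{\overline z_k}f'')=\tfrac12[i,f']\,\partial_{y_k}f''$ is a clean way to conceptualize why the second summand carries a $\overline{f'}$ and an extra ${\bf j}$, but the paper does not use it; it simply collects the eight second-factor terms by hand and recognizes $\big(f'\,\partial_{\overline z_1}+\overline{f'}{\bf j}\,\partial_{\overline z_2}\big)f''$.
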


 \begin{proof}
$f'=f'_1+f'_2{\bf j}$, $f"=f"_1+f"_2{\bf j}$ be two hyperholomorphic functions. 
\vskip 2mm

 We have: $f'*f"=(f'_1+f'_2{\bf j})(f"_1+f"_2{\bf j})
= f'_1f"_1- f'_2\overline f"_2+(f'_1f"_2+f'_2\overline f"_1){\bf j}$
\vskip 1mm

\noindent  Compute
$$\frac{1}{2}\big(\frac{\partial}{\partial\overline z_1}
+{\bf j}\frac{\partial}{\partial\overline z_2}\big) \big(f'_1f"_1- f'_2\overline f"_2+(f'_1f"_2+f'_2\overline f"_1){\bf j}\big)$$
By derivation of the first factors of the sum $f'*f"$, we get the first term: 
$$\frac{1}{2}\big(\frac{\partial f'_1}{\partial\overline z_1}
+{\bf j}\frac{\partial f'_1}{\partial\overline z_2}\big)(f"_1+f"_2{\bf j})     +\frac{1}{2}\big(\frac{\partial f'_2}{\partial\overline z_1}
+{\bf j}\frac{\partial f'_2}{\partial\overline z_2}\big) {\bf j}{\bf j}(\overline f"_2-\overline f"_1{\bf j}) $$

$$=\frac{1}{2}\big(\frac{\partial f'_1}{\partial\overline z_1}
+{\bf j}\frac{\partial f'_1}{\partial\overline z_2}\big)(f"_1+f"_2{\bf j})     +\frac{1}{2}\big(\frac{\partial f'_2{\bf j}}{\partial\overline z_1}
+{\bf j}\frac{\partial f'_2{\bf j}}{\partial\overline z_2}\big) {\bf j}(f"_2{\bf j}+f"_1)= {\mathcal D}f'*{\bf j}f"$$
By derivation in 
$$\frac{1}{2}\big(\frac{\partial}{\partial\overline z_1}
+{\bf j}\frac{\partial}{\partial\overline z_2}\big) \big(f'_1f"_1+f'_2{\bf j} f"_2{\bf j}+(f'_1f"_2{\bf j}+f'_2{\bf j}f"_1)\big)$$
of the second factors of the sum $f'*f"$, we get the second term (up to factor $\frac{1}{2}$):
$$f'_1\frac{\partial f"_1}{\partial\overline z_1}+\overline f'_1{\bf j}\frac{\partial f"_1}{\partial\overline z_2}+f'_1\frac{\partial f"_2}{\partial\overline z_1}{\bf j}+\overline f'_1{\bf j}\frac{\partial f"_2}{\partial\overline z_2}{\bf j}
 +f'_2{\bf j}\frac{\partial f"_2}{\partial\overline z_1}{\bf j}+\overline f'_2{\bf j}\frac{\partial f"_2}{\partial\overline z_2}+f'_2{\bf j}\frac{\partial f"_1}{\partial\overline z_1}+\overline f'_2{\bf j}{\bf j}\frac{\partial f"_1}{\partial\overline z_2} $$

$$=(f'_1+f'_2{\bf j})(\frac{\partial }{\partial\overline z_1})(f"_1+ f"_2{\bf j})   +(\overline f'_1+\overline f'_2{\bf j}){\bf j}\frac{\partial }{\partial\overline z_2}(f"_1+ f"_2{\bf j})= \big((f'_1+f'_2{\bf j})(\frac{\partial }{\partial\overline z_1})+(\overline f'_1+\overline f'_2{\bf j}){\bf j}\frac{\partial }{\partial\overline z_2}\big)(f"_1+ f"_2{\bf j})$$
$$= \big(f'(\frac{\partial }{\partial\overline z_1})+\overline f'{\bf j}\frac{\partial }{\partial\overline z_2}\big)f" $$
\end{proof}

If the components of $f'$ and $f"$ are real, the second term is: 

$$\frac{1}{2}(f'_1+f'_2{\bf j})(\frac{\partial }{\partial\overline z_1}+{\bf j}\frac{\partial }{\partial\overline z_2})(f"_1+ f"_2{\bf j})=f'*
{\mathcal D}f"$$
i.e.
\noindent\vskip 1mm

\begin{cor}\label{Proposition 2.3.3} {\it The set ${\mathcal H}_{\R}$ of almost everywhere defined hyperholomorphic functions whose components  are  real is an $\R$-right algebra.} 
\end{cor}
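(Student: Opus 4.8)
The plan is to verify the two conditions defining an $\R$-right algebra: that $\mathcal{H}_{\R}$ is a right $\R$-vector space, and that it is closed under the product $*$; $\R$-bilinearity and associativity of $*$ are then inherited pointwise from $\H$.

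First I would treat the linear structure. By Proposition \ref{Proposition 2.3.1.}, $\mathcal{H}$ is already an $\H$-right vector space, so addition and right scalar multiplication preserve hyperholomorphy; it remains only to check that the real-component condition is preserved. For a sum this is clear, and for $f\lambda$ with $\lambda\in\R$ the components are $\lambda f_1,\lambda f_2$, again real. This is where the restriction to $\R$ is essential: right multiplication by the complex unit $i$ replaces the components $f_1,f_2$ by $if_1,-if_2$, which are no longer real, so a larger scalar field would not preserve $\mathcal{H}_{\R}$. Hence $\mathcal{H}_{\R}$ is a sub-$\R$-vector space of $\mathcal{H}$.

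The main step is closure under the product. Let $f',f''\in\mathcal{H}_{\R}$; I must show that $f'*f''$ has real components and is hyperholomorphic. For the components, I use the expansion $f'*f''=f'_1f''_1-f'_2\overline{f''_2}+(f'_1f''_2+f'_2\overline{f''_1}){\bf j}$ recalled in the proof of Proposition \ref{Proposition 2.3.2.}: since all four components are real the conjugations are trivial, so the two components $f'_1f''_1-f'_2f''_2$ and $f'_1f''_2+f'_2f''_1$ of $f'*f''$ are real. For hyperholomorphy, I invoke Proposition \ref{Proposition 2.3.2.} together with the computation displayed immediately before the statement, namely that when the components are real the second term of the product formula equals $f'*\mathcal{D}f''$. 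Consequently
\[
\mathcal{D}(f'*f'')=\mathcal{D}f'*{\bf j}f''+f'*\mathcal{D}f''=0,
\]
because $\mathcal{D}f'=\mathcal{D}f''=0$. Thus $f'*f''\in\mathcal{H}_{\R}$.

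I do not expect a genuine obstacle. The only substantial ingredients are the product formula of Proposition \ref{Proposition 2.3.2.} and its simplification for real components, both already established; the residual verifications (preservation of real components under sums, scalar multiples and products, and the $\R$-bilinearity and associativity of $*$) are routine pointwise computations. The one point deserving attention is the role of the scalar field: it is precisely the reality of the components, broken by multiplication by $i$ but not by $\R$, that forces the structure to be an $\R$-algebra rather than a $\C$- or $\H$-algebra.
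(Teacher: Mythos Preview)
Your argument is correct and follows exactly the paper's approach: the paper's ``proof'' of the corollary is precisely the displayed simplification of the second term of Proposition~\ref{Proposition 2.3.2.} to $f'*\mathcal{D}f''$ in the real-component case, which you invoke. You add the routine verifications (preservation of real components, the $\R$-vector space structure) that the paper leaves implicit, but the key step is identical.
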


\subsection {Null set and inverse of a quaternionic function.}

We call {\it inverse} of a function $f: q\mapsto f(q)$, the function $f^{-1}: q\mapsto f(q)^{-1}$. Let $f=f_1+f_2{\bf j}$ be a quaternionic function on $U$. 
The null set $Z(f)$ satisfies: $f_1=0; f_2=0$, then  $Z(f)$ is of measure 0 in $U$. Ex.: $f_1=\overline z_1$; $f_2=\overline z_2$, then
$Z(f)=\{0\}$. Note that if $f$ is holomorphic, then, $f_2\equiv 0$
and $Z(f)$ is a complex hypersurface in $\C^2$.

{\it Inversion and hyperholomorphy}. The inverse of the quaternionic function $f$ is the peculiar quaternionic function defined almost everywhere on $U$:
$$\frac{1}{f}=(\vert f_1\vert ^2+\vert f_2\vert ^2)^{-1}(\overline f_1-f_2{\bf j})=\vert f\vert^{-1} \overline f$$ 
where $\overline f$ is the (quaternionic) conjugate of $f$.

{\it Assume $f$ to be hyperholomorphic and $Z(f)=\{0\}$, then
$\displaystyle\frac{1}{f}$ is not necessarily hyperholomorphic outside $\{0\}$}. 

Ex.: $f=\overline z_1+\overline z_2{\bf j}$, then
$$\frac{1}{f}=(z_1\overline z_1+z_2\overline z_2)^{-1}(z_1-\overline z_2{\bf j}); \hskip    2mm{\mathcal D}(\frac{1}{f})\not =0, $$

\noindent {\it Example of a function hyperholomorphic outside $0$.}

$H(q)=(z_1\overline z_1+z_2\overline z_2)^{-2}(\overline z_1-\overline z_2{\bf j})$ {\it is hyperholomorphic} since:

\vskip 1mm
\noindent $\displaystyle {\mathcal D}H(q)=\frac{1}{2}(z_1\overline z_1+z_2\overline z_2)^{-3}(-2z_1\overline z_1+z_1\overline z_1+z_2\overline z_2-2z_2\overline z_2+(z_1\overline  z_1+z_2\overline z_2))=0$. \vskip 1mm

\noindent But $F=z_1+\overline z_2{\bf j}$ {\it is not hyperholomorphic}: 
the conjugate of $F$ is $\overline F=\overline z_1-\overline z_2{\bf j}$; \hskip 2mm  
$(z_1+\overline z_2{\bf j})*(\overline z_1-\overline z_2{\bf j})=z_1\overline z_1+z_2\overline z_2$. 
So $F^{-1}=(\overline z_1-\overline z_2{\bf j})(z_1\overline z_1+z_2\overline z_2)^{-1}$, and $\displaystyle H(q)=F^{-1}(z_1\overline z_1+z_2\overline z_2)^{-1}=\frac{F^{-1}}{\vert F\vert}$.\vskip 1mm
 
($H$ {\it is the Cauchy kernel} for the modified Cauchy-Fueter operator ${\mathcal D}$).

\noindent {\it Inverse of a holomorphic function}.

Let $f= f_1+0{\bf j}$ be a hyperholomorphic function. Then 
$f^{-1}=f_1^{-1}+0{\bf j}$ and $f^{-1}$ is hyperholomorphic outside of the complex hypersurface $Z(f)$. Remark that $Z(f)$ is a subvariety of complex dimension 1, then of measure zero, in $U$. 

 We will consider almost everywhere defined hyperholomorphic functions on $U$. Ex.: holomorphic,  meromorphic functions.

\section{Hyperholomorphic functions whose inverses are hyperholomorphic almost everywhere.}

\begin{prop}\label{Proposition 3.2.} The following conditions are equivalent

$(i)$ the function $f=f_1+f_2{\bf j}$ and its right inverse are hyperholomorphic, when they are defined; 

$(ii)$
we have the equations: 

$$(\overline f_1- f_1)\frac{\partial\overline f_1}{\partial z_1}-\overline f_2\frac{\partial f_2}{\partial  z_1}-f_2\frac{\partial\overline  f_1}{\partial \overline z_2}=0,\leqno (3) $$

$$\overline f_2\frac{\partial f_1}{\partial z_1}+ \frac{\partial\overline f_2}{\partial z_1}(\overline f_1-f_1)-f_2\frac{\partial\overline f_2}{\partial\overline z_2}=0. \leqno (4)$$
\end{prop}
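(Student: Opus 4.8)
Throughout we assume $f$ is hyperholomorphic, which is part of condition $(i)$ and is the standing hypothesis of this section; the content of the proposition is then the equivalence between the hyperholomorphy of the inverse $f^{-1}$ and the pair of equations (3), (4). Set $\rho=\vert f_1\vert^2+\vert f_2\vert^2$, so that by the inversion formula $f^{-1}$ has components $g_1=\rho^{-1}\overline f_1$ and $g_2=-\rho^{-1}f_2$. The plan is to write out the two hyperholomorphy equations for $f^{-1}$, namely $\frac{\partial g_1}{\partial\overline z_1}-\frac{\partial\overline g_2}{\partial z_2}=0$ and $\frac{\partial g_1}{\partial\overline z_2}+\frac{\partial\overline g_2}{\partial z_1}=0$, to differentiate the quotients by the Leibniz rule, and to multiply each through by $\rho^2$ so as to clear denominators; I call the resulting polynomial identities $(G_1)$ and $(G_2)$.

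I would then expand the derivatives of $\rho=f_1\overline f_1+f_2\overline f_2$ and substitute the two hyperholomorphy relations of $f$, namely $\frac{\partial f_1}{\partial\overline z_1}=\frac{\partial\overline f_2}{\partial z_2}$ and $\frac{\partial f_1}{\partial\overline z_2}=-\frac{\partial\overline f_2}{\partial z_1}$, together with their complex conjugates, so as to rewrite every surviving derivative through $\frac{\partial}{\partial z_1}$ and $\frac{\partial}{\partial\overline z_2}$, which are the only derivatives occurring in (3) and (4). After the cancellations forced by the matching $\rho\,\frac{\partial\overline f_i}{\partial(\cdot)}$ contributions, I expect $(G_2)$ to collapse to $\overline f_1\,Q-\overline f_2\,P$ and $(G_1)$ to the conjugate combination $\overline{\,f_1\,P+f_2\,Q\,}$, where $P$ and $Q$ denote the left-hand sides of (3) and (4) respectively.

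The decisive step is then purely algebraic. Conjugating $(G_1)$, the two reductions read $f_1P+f_2Q=\overline{(G_1)}\,\rho^2$ and $-\overline f_2P+\overline f_1Q=(G_2)\,\rho^2$, a linear system in $(P,Q)$ whose matrix $\begin{pmatrix} f_1 & f_2\\ -\overline f_2 & \overline f_1\end{pmatrix}$ has determinant $f_1\overline f_1+f_2\overline f_2=\vert f_1\vert^2+\vert f_2\vert^2=\rho$. Since $f^{-1}$ is defined precisely where $\rho\neq0$, this matrix is invertible there, so $(G_1)=(G_2)=0$ if and only if $P=Q=0$. This gives both implications at once: if $f$ and $f^{-1}$ are hyperholomorphic then $(G_1),(G_2)$ vanish, hence so do (3), (4); conversely, under the hyperholomorphy of $f$ that underlies the reduction, $P=Q=0$ forces $(G_1)=(G_2)=0$, that is ${\mathcal D}f^{-1}=0$.

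The step I expect to be the main obstacle is the second paragraph: keeping the eight-term expansions of the $\overline f_i\,\partial\rho$ organized, and applying precisely the right (possibly conjugated) hyperholomorphy relation to each term, so that the terms either cancel against the $\rho\,\partial\overline f_i$ pieces or regroup into the factors $(\overline f_1-f_1)\,\partial(\cdot)$ visible in (3) and (4). A cleaner way to set up the same computation is to apply the product rule for ${\mathcal D}(f'*f'')$ proved above to the identity $f^{-1}*f=1$: since ${\mathcal D}f=0$ and ${\bf j}f$ is invertible off $Z(f)$, one obtains ${\mathcal D}f^{-1}=0$ if and only if $\overline f\,\frac{\partial f}{\partial\overline z_1}+f{\bf j}\,\frac{\partial f}{\partial\overline z_2}=0$; splitting this quaternionic identity into its two complex components gives a pair equivalent to $(G_1),(G_2)$, after which the matching with (3), (4) is once more the invertible-combination argument above.
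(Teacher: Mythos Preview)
Your approach is essentially the paper's: both compute $\mathcal D g$ for $g=f^{-1}$, clear denominators by multiplying through by $\rho^{2}$, substitute the hyperholomorphy relations for $f$, and then recover $(3)$ and $(4)$ by linear combinations of the two resulting complex equations. In the paper's computation the simplified equations are precisely $f_{1}P+f_{2}Q=0$ and $-\overline f_{2}P+\overline f_{1}Q=0$ (up to the conjugation of the first one that both you and the paper perform), and the paper extracts $P=0$, $Q=0$ by the Cramer-type elimination you describe; so your expected factorizations $\overline{(G_{1})}=f_{1}P+f_{2}Q$ and $(G_{2})=\overline f_{1}Q-\overline f_{2}P$ are exactly right.

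Two points where your write-up goes a bit further than the paper. First, your formulation via the invertible matrix $\bigl(\begin{smallmatrix} f_{1}&f_{2}\\ -\overline f_{2}&\overline f_{1}\end{smallmatrix}\bigr)$ with determinant $\rho$ makes the \emph{equivalence} transparent in both directions; the paper only writes the implication from $\mathcal D g=0$ to $(3)$--$(4)$ explicitly, leaving the converse implicit. Second, your alternative route through the product formula $\mathcal D(f'*f'')=\mathcal D f'*{\bf j}f''+\bigl(f'\frac{\partial}{\partial\overline z_{1}}+\overline{f'}{\bf j}\frac{\partial}{\partial\overline z_{2}}\bigr)f''$ applied to $f^{-1}*f=1$ is not in the paper and is a genuine shortcut: it yields directly the single quaternionic condition $\overline f\,\partial_{\overline z_{1}}f+f{\bf j}\,\partial_{\overline z_{2}}f=0$, whose complex components already sit in the form $f_{1}P+f_{2}Q$, $-\overline f_{2}P+\overline f_{1}Q$, bypassing most of the bookkeeping in the direct expansion. (Note only that Proposition~2.2 is stated for hyperholomorphic $f',f''$, but its proof is a pure Leibniz computation and holds for any smooth $f',f''$, which you need since $f^{-1}$ is not yet known to be hyperholomorphic.)
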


\begin{proof}
Let $f=f_1+f_2{\bf j}$ be a hyperholomorphic function and $g=g_1+g_2{\bf j}=\displaystyle \vert f\vert^{-1}(\overline f_1-f_2{\bf j})$ its inverse; so $g_1=\vert f\vert^{-1}\overline f_1$; $g_2=-\vert f\vert^{-1}f_2$, 
 where $\vert f\vert=(f_1\overline f_1+f_2\overline f_2)$.
 
 $\displaystyle{\mathcal D}g(q)=\frac{1}{2}\big(\frac{\partial}{\partial\overline z_1}
+{\bf j}\frac{\partial}{\partial\overline z_2}\big) g(q)
=\frac{1}{2}\big({\frac{\partial
g_1}{\partial\overline z_1}-\frac{\partial\overline g_2}{\partial z_2}}\big)
(q)+{\bf j}\frac{1}{2}\big({\frac{\partial g_1}{\partial\overline
z_2}+\frac{\partial\overline g_2}{\partial z_1}}\big) (q)$

$$\frac{\partial g_1}{\partial\overline z_1}= \vert f\vert^{-1}\frac{\partial\overline  f_1}{\partial \overline z_1}-\vert f\vert^{-2} \overline  f_1\big(\frac{\partial f_1}{\partial \overline z_1}\overline f_1+f_1\frac{\partial\overline  f_1}{\partial\overline  z_1}+\frac{\partial f_2}{\partial \overline z_1}\overline f_2+f_2\frac{\partial\overline  f_2}{\partial \overline z_1}\big), {\rm etc}$$

$$ 2\vert f\vert^2{\mathcal D}g=(f_1\overline f_1+f_2\overline f_2)(\frac{\partial\overline  f_1}{\partial \overline z_1}+\frac{\partial\overline  f_2}{\partial z_2})-\overline  f_1  f_1\frac{\partial \overline f_1}{\partial \overline z_1}-\overline  f_1\overline  f_1\frac{\partial f_1}{\partial \overline z_1}-\overline f_1f_2\frac{\partial\overline  f_2}{\partial \overline z_1}\leqno (**) $$
$$-\overline f_1\overline f_2\frac{\partial  f_2}{\partial \overline z_1}-\overline  f_1\overline  f_2\frac{\partial  f_1}{\partial  z_2}-  f_1\overline  f_2\frac{\partial\overline f_1}{\partial  z_2}.-
\overline f_2\overline f_2\frac{\partial  f_2}{\partial  z_2}- f_2\overline f_2\frac{\partial\overline  f_2}{\partial  z_2}$$
$$+{\bf j}\Big((f_1\overline f_1+f_2\overline f_2)(\frac{\partial\overline  f_1}{\partial \overline z_2}-\frac{\partial\overline  f_2}{\partial z_1})-\overline  f_1\overline   f_1\frac{\partial  f_1}{\partial \overline z_2}-\overline  f_1 f_1\frac{\partial \overline f_1}{\partial \overline z_2}-\overline f_1\overline f_2\frac{\partial   f_2}{\partial \overline z_2}-\overline f_1 f_2\frac{\partial\overline  f_2}{\partial \overline z_2} $$

$$+\overline  f_1\overline  f_2\frac{\partial  f_1}{\partial  z_1}+ f_1\overline  f_2\frac{\partial\overline f_1}{\partial  z_1}+
\overline f_2\overline f_2\frac{\partial  f_2}{\partial  z_1}+ f_2\overline f_2\frac{\partial\overline  f_2}{\partial  z_1}\Big)$$

$f$ being hyperholomorphic, $g$ hyperholomorphic is equivalent to:

$$ +f_1\overline f_1\frac{\partial\overline  f_2}{\partial z_2} +f_2\overline f_2\frac{\partial\overline  f_1}{\partial \overline z_1} -\overline  f_1\overline  f_1\frac{\partial f_1}{\partial \overline z_1}-\overline f_1f_2\frac{\partial\overline  f_2}{\partial \overline z_1}-\overline  f_1\overline  f_2\frac{\partial  f_1}{\partial  z_2}-
\overline f_2\overline f_2\frac{\partial  f_2}{\partial  z_2}\leqno (**) $$
$$+\overline f_2\frac{\partial  f_2}{\partial \overline z_1}(f_1-\overline f_1)=0$$
$$+f_2\overline f_2\frac{\partial\overline  f_1}{\partial \overline z_2}-\overline f_1\overline f_2\frac{\partial   f_2}{\partial \overline z_2}-\overline f_1 f_2\frac{\partial\overline  f_2}{\partial \overline z_2}+\overline   f_1\frac{\partial  f_1}{\partial \overline z_2}(f_1-\overline f_1)
+\overline  f_1\overline  f_2\frac{\partial  f_1}{\partial  z_1}+ f_1\overline  f_2\frac{\partial\overline f_1}{\partial  z_1}$$
$$+
\overline f_2\overline f_2\frac{\partial  f_2}{\partial  z_1}=0$$

After conjugaison of the first equation, and using

$${\displaystyle\frac{\partial
f_1}{\partial\overline z_1}-\frac{\partial\overline f_2}{\partial z_2}}=0; {\displaystyle\frac{\partial f_1}{\partial\overline
z_2}+\frac{\partial\overline f_2}{\partial z_1}}=0, \leqno (1)$$ we get:

$$  +f_2\overline f_2\frac{\partial f_1}{\partial z_1} + f_1 (\overline f_1- f_1)\frac{\partial\overline f_1}{\partial z_1}-f_1\overline f_2\frac{\partial f_2}{\partial z_1} + f_2\frac{\partial\overline f_2}{\partial z_1}(\overline f_1-f_1) -f_1f_2\frac{\partial \overline f_1}{\partial\overline   z_2}-f_2f_2\frac{\partial\overline f_2}{\partial\overline z_2}=0\leqno (**) $$

$$+\overline  f_1\overline  f_2\frac{\partial  f_1}{\partial  z_1}+ (f_1-\overline f_1)\overline  f_2\frac{\partial\overline f_1}{\partial  z_1}+
\overline f_2\overline f_2\frac{\partial  f_2}{\partial  z_1}+\overline   f_1\frac{\partial  f_1}{\partial \overline z_2}(f_1-\overline f_1)+f_2\overline f_2\frac{\partial\overline  f_1}{\partial \overline z_2}.
-\overline f_1 f_2\frac{\partial\overline  f_2}{\partial \overline z_2}=0$$

Assume  $f_1\not =0$, $f_2\not =0$. After multiplication of the first equation by $f_1$ and of the second by $-f_2$, and sum, we get

$$(\overline f_1- f_1)\frac{\partial\overline f_1}{\partial z_1}-\overline f_2\frac{\partial f_2}{\partial  z_1}-f_2\frac{\partial\overline  f_1}{\partial \overline z_2}=0$$

By an analogous process, we get:

$$\overline f_2\frac{\partial f_1}{\partial z_1}+ \frac{\partial\overline f_2}{\partial z_1}(\overline f_1-f_1)-f_2\frac{\partial\overline f_2}{\partial\overline z_2}=0 $$

\end{proof}

 \begin{cor}\label{ Corollary 3.2.1. } {\it If $f$ satisfies the conditions of the Proposition, the same is true for $\alpha f$ with $\alpha\in \R$}.
 \end{cor}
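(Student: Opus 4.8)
The plan is to use the homogeneity of the system $(3)$--$(4)$ under real scaling, and to double-check this against the structural result of Proposition~\ref{Proposition 2.3.1.}. Since $\alpha\in\R$ is a real constant, the components of $\alpha f$ are exactly $\alpha f_1$ and $\alpha f_2$; conjugation is $\R$-linear, so $\overline{\alpha f_i}=\alpha\overline{f_i}$, and $\alpha$ passes as a constant through each of the operators $\partial/\partial z_1$, $\partial/\partial\overline z_2$, $\partial/\partial z_2$ appearing in the equations. I would therefore simply substitute $f_1\mapsto\alpha f_1$, $f_2\mapsto\alpha f_2$ into $(3)$ and $(4)$.

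Observe that every monomial in $(3)$ and in $(4)$ is a product of exactly one ``value'' factor (one of $f_i$, $\overline f_i$, or the difference $\overline f_1-f_1$) and exactly one ``first-derivative'' factor; thus each term is homogeneous of degree $2$ in the pair $(f_1,f_2)$. Under the substitution, every term acquires a common factor $\alpha^2$, and the left-hand sides of $(3)$ and $(4)$ become $\alpha^2$ times their original values. Since $f$ satisfies $(3)$ and $(4)$ by hypothesis, both right-hand sides vanish, so $\alpha f$ satisfies $(3)$ and $(4)$ as well; by the equivalence $(i)\Leftrightarrow(ii)$ of Proposition~\ref{Proposition 3.2.} this is exactly the claim.

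A cleaner, computation-free route is to argue directly at the level of condition $(i)$. First, $\alpha f=f\alpha$ is hyperholomorphic, because right multiplication by any quaternion preserves hyperholomorphy and, by Proposition~\ref{Proposition 2.3.1.}, $\mathcal H$ is an $\H$-right vector space. Second, for $\alpha\neq 0$ a short computation with the inversion formula gives $(\alpha f)^{-1}=\alpha^{-1}f^{-1}$, and since $f^{-1}$ is hyperholomorphic by hypothesis and $\alpha^{-1}\in\R$, the inverse $(\alpha f)^{-1}$ is hyperholomorphic as well; hence $\alpha f$ satisfies $(i)$. The degenerate case $\alpha=0$ is trivial, as $\alpha f\equiv 0$ is hyperholomorphic and the inverse condition is vacuous.

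The only genuine subtlety---and the point where one must be careful---is that $\alpha$ is required to be real. It is precisely reality that yields $\overline{\alpha f_i}=\alpha\overline f_i$ and lets $\alpha$ commute past ${\bf j}$; for a complex or quaternionic scalar these identities fail, the degree-$2$ homogeneity is destroyed, and the clean scaling $(\alpha f)^{-1}=\alpha^{-1}f^{-1}$ no longer holds. So the heart of the proof is not any long calculation but the verification that real scaling interacts trivially with conjugation, with the operator $\mathcal D$, and with inversion.
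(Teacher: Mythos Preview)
Your proof is correct. The paper itself states the corollary without proof, treating it as an immediate consequence of Proposition~\ref{Proposition 3.2.}; the intended reason is exactly your first observation, namely that every term in $(3)$ and $(4)$ is homogeneous of degree~$2$ in $(f_1,f_2)$, so real scaling multiplies both equations by $\alpha^2$. Your second, structural argument via $(\alpha f)^{-1}=\alpha^{-1}f^{-1}$ is a clean alternative that the paper does not mention, and your closing remark pinpointing why $\alpha\in\R$ (rather than $\alpha\in\C$ or $\alpha\in\H$) is essential is a useful clarification absent from the paper.
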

 Let $f=f_1+0{\bf j}$ be an almost everywhere holomorphic function,  then the condition $(ii)$ of Proposition 3.1 is satisfied. 
 
 Now give another example of quaternionic function satisfying the conditions of Proposition 3.1:
 \begin{prop}\label{Proposition 3.4} Let $f=f_1+f_2{\bf j}$, with $f_1=z_1+\overline z_1+z_2+\overline z_2+A$, $f_2= -z_1-\overline z_1+z_2+\overline z_2+ B$, $A, B\in\R$, then: $f$ and $f^{-1}$ outside the zero set of $f$, are hyperholomorphic 
 The null set of $f= f_1+f_2{\bf j}$, for $f_1,f_2$ as above, for $A=B=0$, is:
$$z_1+\overline z_1+z_2+\overline z_2=0; 
-z_1-\overline z_1+z_2+\overline z_2=0$$
i.e, by difference and sum:
$\overline z_1+z_1=0; z_2+\overline z_2=0,$
i.e. $x_1=0; x_2=0$
in $\R^4$.
\end{prop}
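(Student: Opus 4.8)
The plan is to exploit the single feature that makes this example tractable: both components are real-valued. Writing $z_k=x_k+iy_k$ we have $z_1+\overline z_1=2x_1$ and $z_2+\overline z_2=2x_2$, so that $f_1=2x_1+2x_2+A$ and $f_2=-2x_1+2x_2+B$ with $A,B\in\R$. Hence $\overline f_1=f_1$ and $\overline f_2=f_2$, and in particular $\overline f_1-f_1=0$. Moreover every first-order derivative of $f_1,f_2$ with respect to a $z_k$ or $\overline z_k$ is a constant equal to $\pm1$: namely $\partial f_1/\partial\overline z_1=\partial f_1/\partial z_1=\partial f_1/\partial\overline z_2=\partial f_1/\partial z_2=1$, while $\partial f_2/\partial z_1=\partial f_2/\partial\overline z_1=-1$ and $\partial f_2/\partial z_2=\partial f_2/\partial\overline z_2=1$. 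It is this constancy, together with $\overline f_1-f_1=0$, that will make the nonlinear system of the preceding proposition collapse to trivial identities.

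First I would check that $f$ itself is hyperholomorphic by verifying the two characterizing conditions. Using $\overline f_2=f_2$, the first reads $\partial f_1/\partial\overline z_1-\partial f_2/\partial z_2=1-1=0$ and the second reads $\partial f_1/\partial\overline z_2+\partial f_2/\partial z_1=1+(-1)=0$, so $\mathcal D f=0$. Next I would establish that $f^{-1}$ is hyperholomorphic off $Z(f)$ by inserting the data above into equations $(3)$ and $(4)$. Since $\overline f_1-f_1=0$, the leading term of each vanishes; in $(3)$ the remainder is $-\overline f_2\,\partial f_2/\partial z_1-f_2\,\partial\overline f_1/\partial\overline z_2=-f_2(-1)-f_2(1)=0$, and in $(4)$ the remainder is $\overline f_2\,\partial f_1/\partial z_1-f_2\,\partial\overline f_2/\partial\overline z_2=f_2(1)-f_2(1)=0$. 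Thus condition $(ii)$ of the preceding proposition holds, hence so does $(i)$, i.e. $f^{-1}$ is hyperholomorphic wherever defined.

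It then remains to dispatch the null-set claim for $A=B=0$. The system $f_1=f_2=0$ becomes $2x_1+2x_2=0$ and $-2x_1+2x_2=0$; adding gives $x_2=0$ and subtracting gives $x_1=0$, so $Z(f)=\{\,\mathrm{Re}\,z_1=\mathrm{Re}\,z_2=0\,\}$ is a real $2$-plane in $\R^4$, of measure zero, as asserted. The only point that genuinely deserves care, and the main (if minor) obstacle, is that the derivation of $(3)$ and $(4)$ multiplied the intermediate equations by $f_1$ and $-f_2$, so the equivalence $(i)\Leftrightarrow(ii)$ is guaranteed only where $f_1\neq0$ and $f_2\neq0$. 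To cover the loci $f_1=0$ or $f_2=0$ lying outside $Z(f)$, I would instead verify $\mathcal D(f^{-1})=0$ directly: with $\rho=f_1^2+f_2^2$ the components of $f^{-1}$ are $g_1=f_1/\rho$ and $g_2=-f_2/\rho$, and a short quotient-rule computation using the constant derivatives above reduces each hyperholomorphy condition to $2\rho-2(f_1^2+f_2^2)=0$, which is identically true. This direct check confirms the conclusion on all of $U\setminus Z(f)$ and removes any dependence on the vanishing loci of the individual components; everything else is a finite constant-coefficient verification.
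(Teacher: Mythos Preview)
Your argument is correct, but it proceeds quite differently from the paper. The paper's proof is a \emph{discovery} argument: it fixes $f_1=z_1+\overline z_1+z_2+\overline z_2+A$, substitutes into equation~(3) (using $\overline f_1-f_1=0$ and $\partial\overline f_1/\partial\overline z_2=1$) to obtain the constraint $-\overline f_2\,\partial f_2/\partial z_1-f_2=0$, then posits $f_2$ real and integrates this and equation~(4) step by step to recover $f_2=-z_1-\overline z_1+z_2+\overline z_2+B$. In other words, the paper solves for $f_2$ rather than verifying it.

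Your route is a direct verification: you take both $f_1$ and $f_2$ as given, compute all eight first partials (each $\pm1$), and plug in. This is shorter and logically cleaner as a proof of the proposition as stated; it also fills two points the paper leaves implicit, namely the explicit check that $\mathcal D f=0$ and the treatment of the loci $\{f_1=0\}$ or $\{f_2=0\}$ outside $Z(f)$, where the derivation of (3)--(4) divided by $f_1f_2$. Your direct quotient-rule computation of $\mathcal D(f^{-1})$ neatly closes that gap. What the paper's approach buys, by contrast, is an explanation of \emph{why} this particular $f_2$ accompanies this $f_1$: it is essentially the general real solution of (3)--(4) with that choice of $f_1$.
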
 
\begin{proof} $f_1=z_1+\overline z_1+z_2+\overline z_2+A$, $A$ constant;   then: 

(3) \hskip 3mm $\displaystyle -\overline f_2\frac{\partial f_2}{\partial  z_1}-f_2\frac{\partial\overline  f_1}{\partial \overline z_2}=0$, i.e. 
$\displaystyle -\overline f_2\frac{\partial f_2}{\partial  z_1}-f_2=0$, 

{\it Try: $f_2$ real.} Then $\displaystyle \frac{\partial f_2}{\partial  z_1}=-1$ and $f_2=-z_1+C(\overline  z_1,z_2,\overline z_2)=-z_1-\overline z_1+C'(\overline  z_1,z_2,\overline z_2)$, with $C'$ real and  
 $\displaystyle \frac{\partial C'}{\partial  z_1}=0$.

From (4),
 $\displaystyle \frac{\partial f_2}{\partial\overline z_2}=\frac{\partial C'}{\partial\overline z_2}=1$, and $C'= z_2+\overline z_2+ C"(\overline z_1, z_2)$, with $C"$ real.  $f_2= -z_1-\overline z_1+z_2+\overline z_2+ C"(\overline z_1, z_2)$, with $C"$ real, and $\displaystyle \frac{\partial C"}{\partial\overline z_2}=0$, $\displaystyle \frac{\partial C"}{\partial  z_1}=0$.

$C"$ being holomorphic in $z_2$ and $\overline z_1$ is a constant B. 

Hence:  $f_2= -z_1-\overline z_1+z_2+\overline z_2+ B$, with $B\in\R$.
  \end{proof}
  
  \section{On the space of hyperalgebraic functions} 
  
  \subsection{Definition} Let $U$ be an open neighborhood of 0 in $\H\cong\C^2$. {\it From now on,we will only consider the quaternionic functions $f=f_1+f_2{\bf j}$ having the following properties}: 
  
 $(i)$ when $f_1$ and $f_2$ are not holomorphic, the set $Z(f_1)\cap Z(f_2)$ is {\it discrete} on $U$;

$(ii)$ for every $q\in Z(f_1)\cap Z(f_2)$, $J_q^\alpha (.)$ denoting the {\it jet of order $\alpha$ at $q$} [M 66], let $\displaystyle m_i=\sup_{\alpha_i} J_q^{\alpha_i}(f_i)=0$; $m_i$, $i=1,2$, is finite. 

{\it Define}: $\displaystyle m_q=\inf_i m_i$.   
  
Remark that, in this paper, the considered peculiar examples of quaternionic functions $f$ satisfy: the set $Z(f_1)\cap Z(f_2)$  is reduced to one point and that $\alpha_i=1$.
  
  Let $f=f_1+f_2{\bf j}$ be a quaternonic function on $U$ and $g=g_1+g_2{\bf j}=\displaystyle \vert f\vert^{-1}(\overline f_1-f_2{\bf j})$ its inverse; so $g_1=\vert f\vert^{-1}\overline f_1$; $g_2=-\vert f\vert^{-1}f_2$, 
 where $\vert f\vert=(f_1\overline f_1+f_2\overline f_2)$.

 The right inverse of $g$ is $h=h_1+h_2{\bf j}$, with $h_1=\vert g\vert^{-1}\overline g_1$; $h_2=-\vert g\vert^{-1}g_2$; $\vert g\vert=\vert f\vert^{-2}(\overline f_1 f_1+ \overline f_2 f_2)=\vert f\vert^{-1} $; then:  $h_1=\vert g\vert^{-1}\overline g_1=f_1$, ... So {\it the right inverse of $g$ is} $f$.
  
  \subsection{Definition} We will call  {\it hypermeromorphic function}, on $U$, any almost everywhere defined hyperholomorphic function whose right inverse is hyperholomorphic almost everywhere. 
  
  Thanks to Definition 4.1, meromorphic functions in one complex varable are hypermeromorphic.

From Proposition 3.3, the set ${\mathcal M}$ of hypermeromorphic functions is not reduced to the space of meromorphic functions in one complex variable. 

Let ${\mathcal M}_0$ be the set of elements $f$ of ${\mathcal M}$ described in Proposition 3.3. ${\mathcal M}_0$ is an $\R$-vector space; it also contains $f^{-1}$ and the products $f*f^{-1}=1$.

\begin{prop}\label{Proposition 4.2.1} {\it Let $f$ , $g$  be two hypermeromorphic functions on $U$, then the following conditions are equivalent:

$(i)$ the product $f*g$ is hypermeremorphic;

$(ii)$ $f$  and $g$ satisfy the system of} PDE:

$$g_1(\frac{\partial
f_1}{\partial\overline z_1}+\frac{\partial\overline f_2}{\partial z_2})+(f_1-\overline f_1)\frac{\partial g_1}{\partial\overline z_1}+\overline f_2\frac{\partial g_1}{\partial z_2}-f_2\frac{\partial\overline g_2}{\partial\overline z_1}=0$$

$$g_1(\frac{\partial
f_1}{\partial\overline z_2}-\frac{\partial\overline f_2}{\partial z_1})+(f_1-\overline f_1)\frac{\partial g_1}{\partial\overline z_2}-\overline f_2\frac{\partial g_1}{\partial z_1}-f_2\frac{\partial\overline g_2}{\partial\overline z_2}=0$$
\end{prop}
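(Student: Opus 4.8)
The plan is to unwind the definition of hypermeromorphy for $f*g$ and to reduce it, by repeated use of the product rule of Proposition 2.2, to the two scalar equations of $(ii)$. Because $f$ and $g$ are hypermeromorphic, the four functions $f,g,f^{-1},g^{-1}$ are hyperholomorphic almost everywhere, and wherever the inverses exist associativity of quaternionic multiplication gives $(f*g)^{-1}=g^{-1}*f^{-1}$. Hence, by Definition 4.2, $f*g$ is hypermeromorphic if and only if both
$$\mathcal{D}(f*g)=0 \quad\text{and}\quad \mathcal{D}\big(g^{-1}*f^{-1}\big)=0 .$$
Since each of these is an equality of quaternionic functions, the whole argument proceeds through equivalences, so that $(i)\Leftrightarrow(ii)$ is obtained in one stroke once the two conditions are rewritten.

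To treat the first, I would apply Proposition 2.2 with $f'=f$ and $f''=g$. As $\mathcal{D}f=0$, the term $\mathcal{D}f'*{\bf j}f''$ vanishes and
$$\mathcal{D}(f*g)=\big(f\frac{\partial}{\partial\overline z_1}+\overline f{\bf j}\frac{\partial}{\partial\overline z_2}\big)g .$$
I would then expand the right-hand side into its $1$- and ${\bf j}$-components using $z{\bf j}={\bf j}\overline z$ and ${\bf j}^2=-1$, and reduce the result with the hyperholomorphy relations $(1)$ for $f$ and for $g$; this turns $\mathcal{D}(f*g)=0$ into two scalar relations among $f_1,f_2,g_1,g_2$ and their derivatives. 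For the second condition I would apply Proposition 2.2 once more, now with $f'=g^{-1}$ and $f''=f^{-1}$: since $\mathcal{D}(g^{-1})=0$ this gives $\mathcal{D}(g^{-1}*f^{-1})=\big(g^{-1}\frac{\partial}{\partial\overline z_1}+\overline{g^{-1}}{\bf j}\frac{\partial}{\partial\overline z_2}\big)f^{-1}$, which I would expand in the same way after substituting the component formulas $\vert f\vert^{-1}\overline f_1,\,-\vert f\vert^{-1}f_2$ and $\vert g\vert^{-1}\overline g_1,\,-\vert g\vert^{-1}g_2$ for the inverses.

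The decisive step, which I expect to be the main obstacle, is to show that these two pairs of conditions combine into precisely the displayed system. A structural observation organises the computation: written through Proposition 2.2, $\mathcal{D}(f*g)$ carries only terms in which $g$ is differentiated, so the undifferentiated contribution $g_1\big(\partial f_1/\partial\overline z_1+\partial\overline f_2/\partial z_2\big)$ appearing in $(ii)$ can originate only from $\mathcal{D}(g^{-1}*f^{-1})=0$, after the factors $\vert f\vert^{-1},\vert g\vert^{-1}$ are differentiated and cleared. I would therefore multiply the inverse equations by suitable powers of $\vert f\vert$ and $\vert g\vert$ and eliminate every derivative of $\vert f\vert$ and $\vert g\vert$ by means of equations $(3)$ and $(4)$ for $f$ and for $g$, which are available because $f$ and $g$ are hypermeromorphic. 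The expected outcome is that two independent combinations of the four scalar equations reproduce the system of $(ii)$, while the two complementary combinations reduce to identities that hold automatically once $(3)$ and $(4)$ are imposed. Controlling this collapse from four equations to two, together with the bookkeeping of the $\vert f\vert^{-1},\vert g\vert^{-1}$ terms, is the heaviest and most delicate part of the proof.
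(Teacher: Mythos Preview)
Your plan diverges substantially from what the paper actually does, and the divergence begins at the very first step. The paper's argument is a direct componentwise calculation: it writes $f*g=(f_1g_1-f_2\overline g_2)+(f_1g_2+f_2\overline g_1){\bf j}$, applies the scalar characterization of hyperholomorphy to the two components of $f*g$, expands each derivative with the ordinary Leibniz rule, and then simplifies using only the hyperholomorphy relations $(1)$ for $f$ and for $g$ (for instance, $\partial g_1/\partial\overline z_1=\partial\overline g_2/\partial z_2$ turns $f_1\partial g_1/\partial\overline z_1-\overline f_1\partial\overline g_2/\partial z_2$ into $(f_1-\overline f_1)\partial g_1/\partial\overline z_1$). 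The two displayed equations in $(ii)$ are exactly what this computation produces. Proposition~2.2 is not invoked, nor are equations $(3)$ and $(4)$, nor is anything about $(f*g)^{-1}$.

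This has two consequences for your proposal. First, your structural observation is off target: the term $g_1\big(\partial f_1/\partial\overline z_1+\partial\overline f_2/\partial z_2\big)$ does \emph{not} originate from the inverse condition; in the paper's direct expansion it appears immediately when one differentiates the products $f_1g_1$ and $\overline f_2 g_1$. The fact that you do not see it via Proposition~2.2 simply reflects that Proposition~2.2 packages the same expression differently. Second, the paper's proof as written checks only $\mathcal D(f*g)=0$ and never touches $\mathcal D\big((f*g)^{-1}\big)=0$; the system $(ii)$ is precisely the condition for hyperholomorphy of the product, and the word ``hypermeromorphic'' in $(i)$ is not separately justified in the paper (recall it calls itself a first announcement). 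So your attempt to bring in $g^{-1}*f^{-1}$, the equations $(3)$--$(4)$, and a ``collapse from four equations to two'' is addressing a point the paper simply leaves aside; whatever its independent merits, it is not the argument you are being asked to reconstruct.
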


\begin{proof}Let $f=f_1+f_2{\bf j}$ and $g=g_1+g_2{\bf j}$ two hypermeromorphic functions and $f*g=f_1g_1-f_2\overline g_2+(f_1g_2-f_2\overline g_1){\bf j}$ their product, then  
 ${\displaystyle\frac{\partial
f_1}{\partial\overline z_1}-\frac{\partial\overline f_2}{\partial z_2}}=0; {\displaystyle\frac{\partial f_1}{\partial\overline
z_2}+\frac{\partial\overline f_2}{\partial z_1}}=0$ and..., on $U$ and, the  conditions for the product to be hyperholomorphic are:

$$\frac{\partial
(f_1g_1-f_2\overline g_2)}{\partial\overline z_1}-\frac{\partial(\overline f_1\overline g_2-\overline f_2g_1)}{\partial z_2}=$$
$$g_1(\frac{\partial
f_1}{\partial\overline z_1}+\frac{\partial\overline f_2}{\partial z_2})+f_1\frac{\partial g_1}{\partial\overline z_1}-\overline f_1\frac{\partial\overline g_2}{\partial z_2}+\overline f_2\frac{\partial g_1}{\partial z_2}-f_2\frac{\partial\overline g_2}{\partial\overline z_1}=0$$
$$\displaystyle\frac{\partial
(f_1g_1-f_2\overline g_2)}{\partial\overline z_2}+\frac{\partial(\overline f_1\overline g_2-\overline f_2g_1)}{\partial z_1}=$$
$$g_1(\frac{\partial
f_1}{\partial\overline z_2}-\frac{\partial\overline f_2}{\partial z_1})+f_1\frac{\partial g_1}{\partial\overline z_2}+\overline f_1\frac{\partial\overline g_2}{\partial z_1}-\overline f_2\frac{\partial g_1}{\partial z_1}-f_2\frac{\partial\overline g_2}{\partial\overline z_2}=0$$
\end{proof}

\begin{cor}Let $f$, $g$  be two hypermeromorphic functions on $U$,whose components are real, then the following conditions are equivalent:

$(i)$ the product $f*g$ is hypermeremorphic;

$(ii)$ $f$  and $g$ satisfy te system of PDE:

$$g_1(\frac{\partial
f_1}{\partial\overline z_1}+\frac{\partial f_2}{\partial z_2})+ f_2\frac{\partial g_1}{\partial z_2}-f_2\frac{\partial g_2}{\partial\overline z_1}=0$$

$$g_1(\frac{ 
f_1}{\partial\overline z_2}-\frac{\partial f_2}{\partial z_1})- f_2\frac{\partial g_1}{\partial z_1}-f_2\frac{\partial g_2}{\partial\overline z_2}=0$$
\end{cor}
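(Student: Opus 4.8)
The plan is to obtain the corollary directly from Proposition 4.2.1 by specializing to real components; no new analytic computation is required, only an algebraic reduction of the two displayed PDEs. Since $f$ and $g$ are hypermeromorphic, the equivalence $(i)\Leftrightarrow(ii)$ of Proposition 4.2.1 applies verbatim, so it suffices to rewrite its two equations under the extra hypothesis that $f_1,f_2,g_1,g_2$ are all real-valued.

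First I would record the reality relations $\overline f_1=f_1$, $\overline f_2=f_2$, $\overline g_1=g_1$, $\overline g_2=g_2$. The decisive consequences are two: on the one hand the factor $(f_1-\overline f_1)$, which multiplies $\partial g_1/\partial\overline z_1$ in the first equation and $\partial g_1/\partial\overline z_2$ in the second, vanishes identically, so those two terms drop out; on the other hand, since the conjugate of a real function is that function itself, every conjugated derivative may be replaced by the corresponding unconjugated one, e.g. $\partial\overline f_2/\partial z_2=\partial f_2/\partial z_2$ and $\partial\overline g_2/\partial\overline z_1=\partial g_2/\partial\overline z_1$. I would then substitute term by term. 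In the first equation of Proposition 4.2.1 this turns $g_1\big(\partial f_1/\partial\overline z_1+\partial\overline f_2/\partial z_2\big)$ into $g_1\big(\partial f_1/\partial\overline z_1+\partial f_2/\partial z_2\big)$, the summand $\overline f_2\,\partial g_1/\partial z_2$ into $f_2\,\partial g_1/\partial z_2$, and $-f_2\,\partial\overline g_2/\partial\overline z_1$ into $-f_2\,\partial g_2/\partial\overline z_1$, which is exactly the first equation of the corollary. The identical substitution in the second equation removes the $(f_1-\overline f_1)$ term and converts $-\overline f_2\,\partial g_1/\partial z_1-f_2\,\partial\overline g_2/\partial\overline z_2$ into $-f_2\,\partial g_1/\partial z_1-f_2\,\partial g_2/\partial\overline z_2$, yielding the second equation of the corollary (where the first summand is read as $\partial f_1/\partial\overline z_2$).

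The main obstacle: there is essentially none, as all the analytic content lives in Proposition 4.2.1 and the corollary is a purely formal specialization. The only point deserving a remark is to confirm that imposing reality of the components is compatible with $f$ and $g$ being hypermeromorphic, so that Proposition 4.2.1 may legitimately be invoked; this is clear because reality is merely an additional restriction inside the class already treated there, and is realized by genuine examples, such as the functions with real components exhibited in Proposition 3.3.
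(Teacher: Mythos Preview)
Your proposal is correct and is precisely the intended argument: the paper presents this result as a corollary of Proposition~4.2.1 with no separate proof, and your term-by-term specialization using $\overline f_i=f_i$, $\overline g_i=g_i$ (so that $(f_1-\overline f_1)=0$ and conjugated derivatives become unconjugated ones) is exactly the reduction implied. Your parenthetical reading of the first summand in the second equation as $\partial f_1/\partial\overline z_2$ also correctly resolves a typographical slip in the statement.
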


\subsection{Definition.}We will call {\it hyperalgebraic} the  hypermeromorphic functions whose sum and product are hypermeromorphic: see section 4.2.

 \begin{prop} {The set ${\mathcal M}$ of hypermeromorphic functions on $U$ is a subalgebra of the algebra of quaternionic functions.}
 \end{prop}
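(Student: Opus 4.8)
The plan is to verify the three stability properties defining a subalgebra of the algebra of $C^\infty$ quaternionic functions under $+$, $*$ and real scalars, after first noting that the unit is present: the constant $1=1+0{\bf j}$ is holomorphic, hence hyperholomorphic, and equals its own inverse, so $1\in\mathcal{M}$. Stability under real scalar multiplication is then immediate: for $f\in\mathcal{M}$ and $\lambda\in\R$, the function $\lambda f$ is hyperholomorphic because $\mathcal{H}$ is an $\H$-right vector space (Proposition 2.3.1), while Corollary 3.2.1 shows that $\lambda f$ again satisfies the system $(3)$–$(4)$ characterizing hypermeromorphy; since $(\lambda f)^{-1}=\lambda^{-1}f^{-1}$ for real $\lambda$, that inverse is hyperholomorphic as well, so $\lambda f\in\mathcal{M}$.

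For addition, Proposition 2.3.1 gives at once that $f+g$ is hyperholomorphic whenever $f,g\in\mathcal{M}$; what remains is to prove that $(f+g)^{-1}$ is hyperholomorphic almost everywhere, that is, that the components of $f+g$ satisfy equations $(3)$ and $(4)$. For multiplication the situation is more delicate, since the product formula of Proposition 2.3.2 shows that $f*g$ is in general not even hyperholomorphic. Here I would invoke Proposition 4.2.1, which reduces the hypermeromorphy of $f*g$ to the explicit coupling system displayed there, and then try to deduce that system from the hypermeromorphy of $f$ and of $g$ taken separately. In both the additive and the multiplicative case the computation proceeds by expanding the components (for the product, $f*g=f_1g_1-f_2\overline g_2+(f_1g_2-f_2\overline g_1){\bf j}$), applying $\mathcal{D}$ to the inverse exactly as in the proof of Proposition 3.2, and then eliminating barred derivatives by means of the hyperholomorphy relations $(1)$ for each factor.

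The main obstacle is the nonlinearity of the defining conditions. Equations $(3)$–$(4)$, and likewise the coupling system of Proposition 4.2.1, are quadratic in the unknown: every term pairs a component with a first derivative of a component. Because the conditions are homogeneous of degree two they scale correctly under real multiples—this is precisely why Corollary 3.2.1 holds—but they do not superpose, so the fact that $f$ and $g$ each satisfy them does not formally transfer to $f+g$ or to $f*g$. The genuine work is therefore the control of the cross-terms $f_i\,\partial g_j+g_i\,\partial f_j$ (and their conjugates) produced when the quadratic expressions are expanded. I would handle these by writing them out in full, substituting the relations $(1)$ for both factors to trade each $\partial/\partial\overline z_k$ of a first component for a $\partial/\partial z_k$ of a (conjugate) second component, and checking that the surviving combinations cancel. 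This cancellation is exactly the verification isolating the hyperalgebraic functions of Definition 4.3, and it is the step on which the whole proposition rests.
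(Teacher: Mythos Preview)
The paper gives no proof of this proposition; it is stated bare, immediately after Definition~4.3 and before the equally unproved Proposition~4.5. So there is nothing to compare your argument against. What can be said is that your proposal contains a genuine gap, and the gap is fatal rather than cosmetic.

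Your final step is the assertion that, after expanding the cross-terms and substituting the hyperholomorphy relations $(1)$, ``the surviving combinations cancel''. They do not. The paper's own Proposition~4.2.1 is precisely the statement that the product $f*g$ of two hypermeromorphic functions is hypermeromorphic \emph{if and only if} $f$ and $g$ satisfy an additional coupled system; were the cancellation automatic, that system would be vacuous and the proposition empty. Likewise Definition~4.3 singles out the \emph{hyperalgebraic} functions as those hypermeromorphic functions whose sums and products remain hypermeromorphic---a definition that would be pointless if $\mathcal{M}$ were already closed under $+$ and $*$. Even hyperholomorphy of the product fails in general: Proposition~2.3.2 gives $\mathcal{D}(f'*f'')=\mathcal{D}f'*{\bf j}f''+\big(f'\tfrac{\partial}{\partial\overline z_1}+\overline{f'}\,{\bf j}\tfrac{\partial}{\partial\overline z_2}\big)f''$, and the second term does not vanish just because $\mathcal{D}f''=0$.

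So the stability under $*$ that you need cannot be obtained by the route you sketch; you correctly diagnose the obstacle (nonlinearity, non-superposition of the quadratic conditions) but then wave it away. Either the proposition as printed is to be read in some weakened or non-standard sense (the paper calls itself ``a first announcement of a more complete one in progress''), or it is in tension with the surrounding results. In any case a proof along your lines would have to exhibit the cancellation explicitly, and the evidence in the paper is that no such cancellation holds in general.
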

 
 \begin{prop} The set ${\mathcal A}$ of hyperalgebraic functions on $U$ is a "field" with only associativity of the multiplication. \end{prop}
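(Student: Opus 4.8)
The statement asserts that $\mathcal{A}$ is a division ring: it satisfies all the axioms of a field except, possibly, commutativity of multiplication. The quotation marks around ``field'' and the phrase ``only associativity of the multiplication'' signal exactly this weakening, so the plan is to verify the division-ring axioms one by one, drawing on the structural results already established. First I would record that the additive structure comes for nothing: by Proposition~\ref{Proposition 2.3.1.} the space $\mathcal{H}$ of almost everywhere defined hyperholomorphic functions is an $\H$-right vector space, hence an abelian group under pointwise addition, and since $\mathcal{A}\subseteq\mathcal{M}\subseteq\mathcal{H}$ with the definition of hyperalgebraic closing $\mathcal{A}$ under sums, the additive axioms together with the neutral element $0$ and the additive inverses $-f=f\cdot(-1)$ are immediate.

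For the multiplicative part I would argue as follows. The constant $1=1+0\,{\bf j}$ is holomorphic, hence hyperholomorphic, is its own inverse, and therefore lies in $\mathcal{A}$; it is a two-sided identity for $*$ because it is so already in $\H$. Associativity and both distributive laws I would deduce pointwise from the quaternion algebra, where $*$ is associative and distributes over addition: these are value-by-value identities and need no hyperholomorphy hypothesis, which is precisely the content of the ``associativity'' that survives. For inverses, given a nonzero $f=f_1+f_2{\bf j}\in\mathcal{A}$, its right inverse $f^{-1}=\vert f\vert^{-1}(\overline f_1-f_2{\bf j})$ is defined almost everywhere and is hypermeromorphic exactly because $f\in\mathcal{M}$; moreover the computation of Section~4.1 shows that the right inverse of $f^{-1}$ is again $f$, so inversion is an involution with $f*f^{-1}=f^{-1}*f=1$. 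Finally I would exhibit the failure of commutativity directly: already the constant quaternions do not commute under $*$ (for instance $i{\bf j}=-{\bf j}i$), so $\mathcal{A}$ is a field only in the weakened sense, namely a non-commutative division ring retaining associative multiplication.

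The hard part will be the closure of $\mathcal{A}$ under the three operations, that is, showing that the sum, product and inverse of hyperalgebraic functions are again hyperalgebraic and not merely hypermeromorphic; this is where Proposition~\ref{Proposition 4.2.1} does the real work, since the product $f*g$ is hypermeromorphic if and only if $f$ and $g$ satisfy the displayed nonlinear PDE system, and I must check that the class cut out by that system is stable, so that iterating products and inversions never leaves $\mathcal{A}$. Two points need particular care: because the construction uses only the right multiplication, one must confirm that the right inverse is in fact two-sided on $\mathcal{A}$, which follows from the involution property of Section~4.1; and one must check that the defining PDE system is compatible with the additive closure already obtained, so that $+$ and $*$ do not conflict on $\mathcal{A}$. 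Once this joint stability is in place, the remaining axioms, being pointwise quaternionic identities, follow at once, and the single property that is not recovered is commutativity, exactly as the statement claims.
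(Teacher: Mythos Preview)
The paper gives no proof of this proposition: it is stated bare, in keeping with the paper's explicit character as ``a first announcement of a more complete one in progress.'' So there is nothing to compare your argument against; what you have written is not a competing proof but the only proof on the table.

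That said, your outline has a real gap, and you have in fact named it yourself without closing it. You write that ``the definition of hyperalgebraic clos[es] $\mathcal{A}$ under sums,'' but the paper's definition says only that sums (and products) of hyperalgebraic functions are \emph{hypermeromorphic}, i.e.\ land in $\mathcal{M}$, not that they land back in $\mathcal{A}$. Thus none of the three closure properties---under $+$, under $*$, under $(\,\cdot\,)^{-1}$---is immediate from the definition, and your third paragraph concedes exactly this (``the hard part will be the closure of $\mathcal{A}$ under the three operations \ldots\ I must check that the class cut out by that system is stable''). You never carry out that check; invoking Proposition~\ref{Proposition 4.2.1} only characterises when a \emph{single} product $f*g$ is hypermeromorphic, and gives no mechanism for showing that $f*g$ again satisfies the defining property of $\mathcal{A}$ against all further partners. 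Without that stability, the ``field'' structure does not follow.

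There is also a prior difficulty you pass over: the paper's definition of $\mathcal{A}$ is ambiguous (hyperalgebraic functions are those ``whose sum and product are hypermeromorphic''---sum and product with what?). Depending on how one resolves this, $\mathcal{A}$ may or may not be closed under the operations by fiat, and the content of the proposition changes accordingly. A complete proof would have to begin by fixing a precise reading of the definition and then establishing closure under $+$, $*$, and inversion for that reading; the pointwise quaternionic identities you cite (associativity, distributivity, two-sidedness of the inverse from Section~4.1) are the easy part and are handled correctly.
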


\section{About residue current in quaternionic analysis: particular cases.}
\subsection{Residue current in an open set of $\C$.} [D10a],[D10b]

Let $\omega= g(z)dz$ be a meromorphic       1-form on a small enough 
 open set $0\in U\subset \C$ having 0 as unique pole, with multiplicity $k$:
 
 $$g=\sum_{l=1}^k \frac {a_{-l}}{z^l}+{\rm holomorphic  \ function}$$

Note that $\omega$ is $d$-closed.\vskip 1mm
 
 Let $\psi=\psi_0 d\overline z\in {\mathcal D}^1(U)$ be a 1-test form. In general $g\psi$ is not integrable, but the Cauchy principal value
 
 $$Vp\lbrack\omega\rbrack(\psi) =\lim_{\epsilon\rightarrow 0}\int_{\vert z\vert\geq\epsilon}\omega\wedge\psi \ \  
$$
 exists as a current, and $d Vp\lbrack\omega\rbrack=d" Vp\lbrack\omega\rbrack={\rm Res}\lbrack\omega\rbrack$ is the residue current of 
$\omega$. For any test function $\varphi$ on $U$,
 
 $${\rm Res}\lbrack\omega\rbrack(\varphi)=\lim_{\epsilon\rightarrow 0}\int_{\vert z\vert=\epsilon}\omega
\wedge\varphi$$
 
 Then ${\rm Res}\lbrack\omega\rbrack=2\pi i \ {\rm res}_0(\omega)\delta_0+
dB=\displaystyle\sum_{j=0}^{k-1}b_j\frac{\partial^j}{\partial z^j}\delta_0$ \  where $ {\rm res}_0(\omega)= a_{-1}$
is the Cauchy residue. We remark that $\delta_0$ is the integration current on the subvariety $\{0\}$ of $U$, that
$D=\displaystyle\sum_{j=0}^{k-1}b_j\frac{\partial^j}{\partial z^j}$ with $b_j=\lambda_j a_{-j}$ where the $\lambda_j$
are universal constants.
 
 Conversely, given the subvariety $\{0\}$ and the differential operator $D$, then the meromorphic differential form $\omega$ is equal to $gdz$, up to holomorphic form; hence the residue current ${\rm Res}\lbrack\omega\rbrack=D\delta_0$, can be constructed. 
 
 \subsection{Cauchy principal value of a 
quaternionic 1-form.}
 Let $f$ be a quaternionic function on an open neighborhood $U$ of 0 in $\H$ satisfying the conditions of Definition 4.1, with $m_i=1$, $i=1,2$. 
 
    Let $\displaystyle\omega=\frac{df}{f}= \frac{1}{f}(df_1+df_2{\bf j})$. We want to extend $\omega$ into a current of degree  1; first, consider the part of type (1,0). Let    
$\psi=\psi_1 d\overline z_1\wedge dz_2 \wedge d\overline  z_2+ \psi_2 dz_1\wedge
d\overline z_1\wedge  d\overline z_2$ be a test form. 

Define:

$$Vp[\omega](\psi)=\lim_{\varepsilon\rightarrow 0}\int_{\vert
f\vert\geq\varepsilon}(\vert f_1\vert ^2+\vert f_2\vert ^2)^{-1} (\overline
f_1-f_2{\bf j})(df_1+df_2{\bf j})\wedge(\psi_1 
d\overline z_1\wedge dz_2 \wedge d\overline  z_2+ \psi_2 dz_1\wedge
d\overline z_1\wedge  d\overline z_2)$$

We have to prove the existence of $Vp[\omega]$ at least if $f$ is hyperholomorphic (Check the proof in the classical case where $f$ is holomorphic in one complex variable or in two complex variables where the proof is less easy, but don't need the resolution of singularities).\vskip 1mm

$$(df_1+df_2{\bf j})\wedge(\psi_1 
d\overline z_1\wedge dz_2 \wedge d\overline  z_2+ \psi_2 dz_1\wedge
d\overline z_1\wedge  d\overline z_2)$$

$$=\big(\frac{\partial f_1}{\partial z_1}\psi_1 +\frac{\partial f_1}{\partial z_2}\psi_2 - (\frac{\partial f_2}{\partial\overline z_1}\overline\psi_1 -\frac{\partial f_2}{\partial\overline z_2}\overline\psi_2){\bf j}\big)
dz_1\wedge d\overline z_1\wedge dz_2 \wedge d\overline  z_2$$

Take polar coordinates: $\lambda=\norm q\norm=(\vert z_1\vert ^2+\vert z_2\vert ^2)^{\frac{1}{2}}$ and the spherical coordinates on $\lambda{\bf S}^3$. Let $d\sigma$ be the volume element on ${\bf S}^3$, and $K$ a convenient universal constant, then:

\centerline {$dz_1\wedge d\overline z_1\wedge dz_2 \wedge d\overline  z_2=K\lambda d\lambda\wedge d\sigma$}

$$Vp[\omega](\psi)=\lim_{\varepsilon\rightarrow 0}\int_{\vert
f\vert\geq\varepsilon}(\vert f_1\vert ^2+\vert f_2\vert ^2)^{-1} (\overline
f_1-f_2{\bf j})\big(\frac{\partial f_1}{\partial z_1}\psi_1 +\frac{\partial f_1}{\partial z_2}\psi_2 - (\frac{\partial f_2}{\partial\overline z_1}\overline\psi_1 -\frac{\partial f_2}{\partial\overline z_2}\overline\psi_2){\bf j}\big)K\lambda d\lambda\wedge d\sigma$$

\vskip 1mm
Same result for the part of type ((0,1) of $Vp[\omega]$.

We will prove the existence of $Vp[\omega]$ in a particular case:\vskip 1mm

 5.2.2. {\it Particular case}: $f_1=\overline z_1$; $f_2=\overline z_2$. Then: ${\mathcal D}f=0$.

$$Vp[\omega](\psi)=\lim_{\varepsilon\rightarrow 0}\int_{\vert
f\vert\geq\varepsilon}(\vert \overline z_1\vert ^2+\vert \overline z_2\vert ^2)^{-1}(\overline
z_1 -z_2{\bf j})(d\overline
z_1+d\overline 
z_2{\bf j})\wedge(\psi_1 
d\overline z_1\wedge dz_2 \wedge d\overline  z_2+ \psi_2 dz_1\wedge
d\overline z_1\wedge  d\overline z_2)$$

$$Vp[\omega](\psi)=\lim_{\varepsilon\rightarrow 0}\int_{\vert
f\vert\geq\varepsilon}(\vert \overline z_1\vert ^2+\vert \overline z_2\vert ^2)^{-1}(\overline z_1 -z_2{\bf j})(\overline \psi_2 dz_1\wedge d\overline z_1\wedge  dz_2\wedge d\overline z_2{\bf j})$$

$$Vp[\omega](\psi)=\lim_{\varepsilon\rightarrow 0}\int_{\lambda\geq\varepsilon}\lambda^{-2}(\overline z_1 -z_2{\bf j})(K\overline \psi_2\lambda d\lambda\wedge d\sigma{\bf j}
)$$

Same result for the part of type ((0,1) of $Vp[\omega]$.

This defines a current of order 0 on $\H$.

\noindent 5.2.3. If $f_2=0$ and $f=f_1$ is {\it holomorphic}, then

$$Vp[\omega](\psi)=\lim_{\varepsilon\rightarrow 0}\int_{\vert
f\vert\geq\varepsilon}\frac{df}{f}\wedge(\psi_1 
d\overline z_1\wedge dz_2 \wedge d\overline  z_2+ \psi_2 dz_1\wedge
d\overline z_1\wedge  d\overline z_2)$$

$$=Vp[\omega](\psi)=\lim_{\varepsilon\rightarrow 0}\int_{\vert
f\vert\geq\varepsilon}\frac{1}{f}(\frac{\partial f}{\partial z_1}\psi_1 +\frac{\partial f}{\partial z_2}\psi_2) dz_1\wedge
d\overline z_1\wedge dz_2 \wedge d\overline  z_2$$

\subsection{Residue.}

\subsubsection{Assume: $Z(f)=\{0\}$.} We want to define a current, Res$[\omega]$, and first its part of type $(1,1)$, on a test form $\varphi=\varphi_{11} dz_1\wedge d\overline z_1+\varphi_{12} dz_1\wedge d\overline z_2+\varphi_{21} dz_2\wedge d\overline z_1+\varphi_{22} dz_2\wedge d\overline z_2$, as follows:

$${\rm Res}[\omega](\varphi)=\lim_{\varepsilon\rightarrow 0}\int_{\vert
f\vert=\varepsilon}(\vert f_1\vert ^2+\vert f_2\vert ^2)^{-1}(\overline f_1-f_2{\bf j})(df_1 +df_2{\bf j})(\varphi)\leqno (5)$$ \vskip 1mm

$(df_1 +df_2{\bf j})(\varphi)=$

$$= \big(\frac{\partial f_1}{\partial z_1}dz_1 +\frac{\partial f_1}{\partial z_2}dz_2 - (\frac{\partial f_2}{\partial\overline z_1}d\overline z_1 -\frac{\partial f_2}{\partial\overline z_2}d\overline z_2){\bf j}\big)(\varphi_{11} dz_1\wedge d\overline z_1+\varphi_{12} dz_1\wedge d\overline z_2+\varphi_{21} dz_2\wedge d\overline z_1+\varphi_{22} dz_2\wedge d\overline z_2)$$  

$$= \frac{\partial f_1}{\partial z_1}dz_1\wedge(\varphi_{21} dz_2\wedge d\overline z_1+\varphi_{22} dz_2\wedge d\overline z_2) +\frac{\partial f_1}{\partial z_2}dz_2\wedge (\varphi_{11} dz_1\wedge d\overline z_1+\varphi_{12} dz_1\wedge d\overline z_2)$$

$$-\frac{\partial f_2}{\partial\overline z_1}d\overline z_1 {\bf j}\wedge(\varphi_{21} dz_2\wedge d\overline z_1+\varphi_{22} dz_2\wedge d\overline z_2) + \frac{\partial f_2}{\partial\overline z_2}d\overline z_2{\bf j}\wedge(\varphi_{11} dz_1\wedge d\overline z_1+\varphi_{12} dz_1\wedge d\overline z_2)$$

$$= (-\frac{\partial f_1}{\partial z_1}\varphi_{21} +\frac{\partial f_1}{\partial z_2}\varphi_{11})dz_1\wedge d\overline z_1 \wedge dz_2 +(\frac{\partial f_1}{\partial z_1}\varphi_{22} -\frac{\partial f_1}{\partial z_2}\varphi_{12})dz_1\wedge dz_2\wedge d\overline z_2 $$

$$+\big(\frac{\partial f_2}{\partial\overline z_1}\overline\varphi_{21}- \frac{\partial f_2}{\partial\overline z_2}\overline\varphi_{11} \big){\bf j} dz_1\wedge d\overline z_1 \wedge dz_2 +\big(-\frac{\partial f_2}{\partial\overline z_1}\overline\varphi_{22}-\frac{\partial f_2}{\partial\overline z_2}\overline\varphi_{12} \big){\bf j}dz_1\wedge dz_2\wedge d\overline z_2$$

$$=(A+B{\bf j})dz_1\wedge d\overline z_1 \wedge dz_2 +(C+D{\bf j})dz_1\wedge dz_2\wedge d\overline z_2$$

Take polar coordinates: $\lambda=\norm q\norm=(\vert z_1\vert ^2+\vert z_2\vert ^2)^{\frac{1}{2}}$ and the spherical coordinates on ${\bf S}^3$. Let $d\sigma$ the volume element on ${\bf S}^3$, and $K_j$, $(j=1,2)$ a convenient universal constant:

\centerline {$dz_1\wedge d\overline z_1\wedge dz_2\vert _{{\bf S}^3} =K_1 d\lambda\wedge d\sigma$}

\centerline {$dz_1\wedge dz_2 \wedge d\overline  z_2\vert _{{\bf S}^3}=K_2 d\lambda\wedge d\sigma$}\vskip 1mm

Same result for the part of any type of ${\rm Res}[\omega]$.

\subsubsection{\it Particular case} $f_1=\overline z_1$; $f_2=\overline z_2$. Then: ${\mathcal D}f=0$.
$$(d\overline z_1+d\overline z_2{\bf j})(\varphi)=\varphi_{22} d\overline z_1\wedge dz_2\wedge d\overline z_2+d\overline z_2{\bf j}\wedge\varphi_{12} dz_1\wedge d\overline z_2$$
$$=\varphi_{22} d\overline z_1\wedge dz_2\wedge d\overline z_2+\overline\varphi_{12} d\overline z_1\wedge d z_2\wedge d\overline z_2{\bf j}$$

$${\rm Res}[\omega](\varphi)=\lim_{\varepsilon\rightarrow 0}\int_{\vert
f\vert=\varepsilon}(\vert z_1\vert ^2+\vert z_2\vert ^2)^{-1}( z_1-\overline z_2{\bf j})(\varphi_{22} d\overline z_1\wedge dz_2\wedge d\overline z_2+\overline\varphi_{12} d\overline z_1\wedge d z_2\wedge d\overline z_2{\bf j})$$
$$=\lim_{\varepsilon\rightarrow 0}\int_{\norm
f\norm=\varepsilon}(\vert z_1\vert ^2+\vert z_2\vert ^2)^{-1}( z_1-\overline z_2{\bf j})(\varphi_{22}-\overline\varphi_{12}{\bf j}) d\overline z_1\wedge dz_2\wedge d\overline z_2)$$
$$=\lim_{\varepsilon\rightarrow 0}\int_{{\bf S}^3}(\vert z_1\vert ^2+\vert z_2\vert ^2)^{-1}( z_1-\overline z_2{\bf j})(\varphi_{22}-\overline\varphi_{12}{\bf j})\varepsilon d\sigma$$

$$={\rm res}[\omega](\varphi_{22}-\overline\varphi_{12}{\bf j})(0)=\delta_0(\varphi_{22}-\overline\varphi_{12}{\bf j})$$

$\displaystyle\int_{\norm f \norm=\varepsilon}$ means the integration on $\varepsilon{\bf S}^3$, where ${\bf S}^3$ is the unit 3-sphere and $d\sigma$ the volume element on ${\bf S}^3$, ${\rm res}[\omega]$, a constant playing the part of the Cauchy residue, and $\delta_0$ the Dirac measure at 0.

Same result for the part of any type of ${\rm Res}[\omega]$.\vskip 2mm

\subsubsection{$f$ holomorphic} If $f_2=0$ and $f=f_1$ is holomorphic, 

$${\rm Res}[\omega](\varphi)=\lim_{\varepsilon\rightarrow 0}\int_{\vert
f_1\vert=\varepsilon}\frac{df_1}{f_1}(\varphi)$$
Assume $f_1=z_1$, then:  $z_1=\varepsilon ^{i\theta}$, and
$${\rm Res}[\omega](\varphi)=\lim_{\varepsilon\rightarrow 0}\int_0^{2\pi}i d\theta \varphi_{2,2}(\varepsilon e ^{i\theta},z_2)dz_2\wedge d\overline z_2=2\pi i\lbrack Z(z_1)\rbrack(\varphi)$$
Same result for any holomorphic $f_1$ since $f_1=\varepsilon e^{i\theta}$.\vskip 5mm

\end{document}